\documentclass[]{amsart}

\usepackage[english]{babel}
\usepackage[utf8]{inputenc}
\usepackage[T1]{fontenc}
\usepackage{microtype}
\usepackage{paralist}
\usepackage{amsmath, amsfonts, amssymb, amsthm}
\usepackage{color}
\usepackage[normalem]{ulem}
\usepackage{tikz}

\newcommand{\CC}{\mathbb{C}}

\newcommand{\ZZ}{\mathbb{Z}}

\newcommand{\Hc}{\mathcal{H}}

\newcommand{\Oc}{\mathcal{O}}

\newcommand{\gfr}{\mathfrak{g}}
\newcommand{\set}[1]{\left\{ #1 \right\}}
\newcommand{\setb}[1]{\left( #1 \right)}
\newcommand{\abs}[1]{\left| #1 \right|}

\newcommand{\bino}[2]{\begin{pmatrix} #1 \\ #2 \end{pmatrix}}

\newtheorem{mymasterthm}{notForUse}
\theoremstyle{definition}

\newtheorem{myrem}[mymasterthm]{Remark}

\theoremstyle{plain}
\newtheorem{mylemma}[mymasterthm]{Lemma}
\newtheorem{mythm}[mymasterthm]{Theorem}

\allowdisplaybreaks

\title[Pillai's conjecture for polynomials]{Pillai's conjecture for polynomials}
\makeatletter
\@namedef{subjclassname@2020}{\textup{2020} Mathematics Subject Classification}
\makeatother
\subjclass[2020]{11D61, 11D85}
\keywords{Pillai problem, polynomials, $ S $-units}

\author[S. Heintze]{Sebastian Heintze}
\address{Sebastian Heintze\newline
	\indent Graz University of Technology\newline
	\indent Institute of Analysis and Number Theory\newline
	\indent Steyrergasse 30/II \newline
	\indent A-8010 Graz, Austria}
\email{heintze@math.tugraz.at}

\thanks{Supported by Austrian Science Fund (FWF) under project I4406}

\begin{document}
	
	\maketitle
	
	
	\begin{abstract}
		In this paper we study the polynomial version of Pillai's conjecture on the exponential Diophantine equation
		\begin{equation*}
			p^n - q^m = f.
		\end{equation*}
		We prove that for any non-constant polynomial $ f $ there are only finitely many vectors $ (n,m,\deg p,\deg q) $ with integers $ n,m \geq 2 $ and non-constant polynomials $ p,q $ such that Pillai's equation holds.
		Moreover, we will give some examples that there can still be infinitely many possibilities for the polynomials $ p,q $.
	\end{abstract}
	
	\section{Introduction}
	
	About hundred years ago Pillai \cite{pillai-1936} considered exponential Diophantine equations of the form
	\begin{equation}
		\label{eq:origpillai}
		x^n - y^m = f,
	\end{equation}
	where $ x,y,f $ are given positive integers and one is interested in integer solutions $ (n,m) $ with $ n,m \geq 2 $.
	A natural question concerning equation \eqref{eq:origpillai} is how many solutions $ (n,m) $ exist for a given triple $ (f,x,y) $.
	After some intermediate results by several authors, Bennett \cite{bennett-2001} proved that for any triple $ (f,x,y) $ of integers with $ fxy \neq 0 $ and $ x,y \geq 2 $ there are at most two solutions of equation \eqref{eq:origpillai}.
	A frequently studied generalization of Pillai's problem is to replace $ x^n $ and $ y^m $ by simple linear recurrence sequences.
	Different authors analyzed this problem for special choices of the linear recurrence sequences and Chim, Pink and Ziegler considered in \cite{chim-pink-ziegler-2018} this generalization for all simple linear recurrence sequences satisfying some natural conditions.
	Furthermore, the Pillai problem as well as its generalizations can also be considered over function fields instead of number fields.
	This was done e.g.\ by the author together with Fuchs in \cite{fuchs-heintze-p8}.
	
	So far we talked about the case that only the exponential parameters are varying and the bases are fixed.
	Already Pillai considered the situation when one allows also $ x $ and $ y $ to vary.
	He conjectured that for any given non-zero integer $ f $ equation \eqref{eq:origpillai} has at most finitely many solutions in integers $ n,m,x,y \geq 2 $.
	In the case $ f = 1 $ we get the Catalan conjecture.
	Mih\u ailescu \cite{mihailescu-2004} proved that $ 3^2 $ and $ 2^3 $ are the only perfect powers which differ exactly by $ 1 $.
	The case $ f \neq 1 $ is still an open problem.
	The purpose of the present paper is to study Pillai's conjecture for polynomials.
	More precisely, we aim for a bound on the solutions $ (n,m,p,q) $ in integers $ n,m \geq 2 $ and non-constant polynomials $ p,q \in \CC[x] $ of the Pillai equation
	\begin{equation*}
		p^n - q^m = f
	\end{equation*}
	for a given polynomial $ f \in \CC[x] $.
	Since there are still infinitely many polynomials if we bound its height (there is no version of Northcott's theorem for function fields), we may have a slightly different statement than in the integer case.
	
	\section{Results}
	
	Our theorem gives an explicit upper bound on the exponential variables and the degrees of the polynomials.
	Therefore we can deduce finiteness of the solutions only for the exponential variables and the degree of the polynomials.
	In Remark \ref{rem:counter} below we will give some counterexamples that in general finiteness does not hold for the polynomials itself.
	
	\begin{mythm}
		\label{thm:pillaiconj}
		Let $ f \in \CC[x] $ be a polynomial with $ \deg f \geq 1 $.
		Then for all solutions $ (n,m,p,q) $ in integers $ n,m \geq 2 $ and polynomials $ p,q \in \CC[x] $ with $ \deg p, \deg q \geq 1 $ of the Pillai equation
		\begin{equation}
			\label{eq:pillaieq}
			p^n - q^m = f
		\end{equation}
		we have
		\begin{equation*}
			\max \setb{n,m,\deg p,\deg q} \leq 4 + 12 \deg f + 8 (\deg f)^2.
		\end{equation*}
		In particular the vector $ (n,m,\deg p,\deg q) $ can take only finitely many different values.
	\end{mythm}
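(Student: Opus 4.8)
The plan is to compare leading terms in \eqref{eq:pillaieq} and then feed a coprimified version of the equation into the Mason--Stothers theorem (the polynomial $ abc $-inequality, equivalently the degree form of the function-field $ S $-unit bound). Write $ a=\deg p $, $ b=\deg q $ and $ D=\max(na,mb) $. Since $ \deg(p^n-q^m)\le D $ we have $ \deg f\le D $. If $ \deg f=D $, then \eqref{eq:pillaieq} immediately forces $ na\le\deg f $ and $ mb\le\deg f $, so $ n,m,a,b\le\deg f $ and we are done. Hence I may assume $ \deg f\le D-1 $; then the coefficients of $ x^D $ in $ p^n $ and $ q^m $ must cancel, which is impossible unless $ na=mb=D $.

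Next I would dispose of the case $ n=m=2 $ separately. Here $ p^2-q^2=(p-q)(p+q)=f $, and since $ \deg f\ge1 $ neither factor is zero, so both $ p-q $ and $ p+q $ are nonzero divisors of $ f $. From $ 2p=(p-q)+(p+q) $ and $ 2q=(p+q)-(p-q) $ one reads off $ \deg p,\deg q\le\deg f $, and $ n=m=2 $, so the asserted bound holds here.

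The main step treats $ (n,m)\neq(2,2) $, so that $ \tfrac1n+\tfrac1m\le\tfrac56 $. Let $ \delta=\gcd(p^n,q^m) $; since $ \delta\mid p^n-q^m=f $ we have $ \deg\delta\le\deg f $. Put $ P=p^n/\delta $, $ Q=q^m/\delta $, $ F=f/\delta $. Then $ P,Q,F $ are pairwise coprime polynomials with $ P-Q=F $, and $ \deg P=\deg Q=D-\deg\delta\ge D-\deg f\ge1 $, so they are not all constant and Mason--Stothers applies. Since every root of $ P $ is a root of $ p $, every root of $ Q $ a root of $ q $, and every root of $ F $ a root of $ f $, writing $ N_0 $ for the number of distinct roots we get
\begin{equation*}
	D-\deg\delta=\deg P\le N_0(PQF)-1\le \deg p+\deg q+\deg f-1 .
\end{equation*}
Substituting $ \deg p=D/n $, $ \deg q=D/m $ and $ \deg\delta\le\deg f $ rearranges this to $ \tfrac16 D\le\big(1-\tfrac1n-\tfrac1m\big)D\le 2\deg f-1 $, hence $ D\le12\deg f-6 $. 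Consequently $ n\le D $, $ m\le D $, $ \deg p=D/n\le D/2 $ and $ \deg q\le D/2 $ are all at most $ 12\deg f $. Combining the three cases yields $ \max\setb{n,m,\deg p,\deg q}\le 12\deg f $, comfortably inside $ 4+12\deg f+8(\deg f)^2 $.

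The step I expect to be the real obstacle is the bookkeeping around common factors in the main step: Mason--Stothers cannot be applied to $ p^n,q^m,f $ directly, so one must pass to the coprime triple $ P,Q,F $, verify pairwise coprimality and that not all three are constant, and carefully track how $ \deg\delta $ enters the final estimate. By contrast the degree dichotomy and the case $ n=m=2 $ are routine once this is set up. (Note that this argument in fact delivers a linear bound in $ \deg f $; the quadratic term in the statement leaves ample slack, so a cruder treatment of the common-factor part would still suffice.)
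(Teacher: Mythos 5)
Your argument is correct, and it reaches the conclusion by a genuinely different route than the paper. The paper rewrites \eqref{eq:pillaieq} as $ 1 + q^m/f - p^n/f = 0 $ and feeds it into the Brownawell--Masser $ S $-unit theorem, letting the $ S $-unit formalism absorb all common-factor issues; this yields $ n \deg p \leq 1 + 2\deg f + \deg p + \deg q $ and the analogous bound for $ m \deg q $, after which it runs a sequential case analysis on $ n \geq 3 $, then $ n = 2,\ m \geq 3 $, then $ n = m = 2 $ (the last via the same factorization $ (p-q)(p+q) = f $ that you use). You instead apply Mason--Stothers directly to polynomials, and the coprimification you flag as the delicate point is handled correctly: $ \delta = \gcd(p^n,q^m) $ divides $ f $, the triple $ P,Q,F $ is pairwise coprime, and $ \deg P = D - \deg\delta \geq D - \deg f \geq 1 $ in the branch where you invoke the theorem, so its hypotheses are met. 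The two tools are essentially equivalent (Brownawell--Masser with $ k = 2 $ is a form of Mason--Stothers), but your two extra observations --- that $ n\deg p = m\deg q = D $ unless $ \deg f = D $ already forces everything, and that $ 1 - \tfrac{1}{n} - \tfrac{1}{m} \geq \tfrac{1}{6} $ once $ (n,m) \neq (2,2) $ --- replace the paper's asymmetric case analysis by a single symmetric estimate and deliver the \emph{linear} bound $ \max\setb{n,m,\deg p,\deg q} \leq 12 \deg f $, which is sharper than the quadratic bound $ 4 + 12\deg f + 8(\deg f)^2 $ of the statement. The only price is the bookkeeping around $ \deg\delta $, which you track correctly.
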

	
	Before we start with preparing the utilities for the proof of this theorem let us give some remarks on the scope of the above statement.
	
	\begin{myrem}
		We do not handle the case that $ f \in \CC $, i.e.\ a constant polynomial, in Theorem \ref{thm:pillaiconj} since this case is already solved.
		For $ f = 0 $ there are obviously infinitely many solutions for the vector $ (n,m,\deg p,\deg q) $.
		One can take e.g.\ the choice $ (n,m,p,q) = (3k,k,g,g^3) $ for any positive integer $ k \geq 2 $ and non-constant polynomial $ g \in \CC[x] $.
		The case $ f \in \CC^* $, i.e.\ constant and non-zero, is solved by Kreso and Tichy in \cite{kreso-tichy-}.
		They proved that there is no vector $ (n,m,p,q) $ with integers $ n,m \geq 2 $ and non-constant polynomials $ p,q \in \CC[x] $ which satisfies equation \eqref{eq:pillaieq} if $ f \in \CC^* $.
		Their argument essentially builds on the fact that $ f $ is constant and thus cannot be generalized to our situation.
		It is clear that there are some $ f \in \CC[x] $ for which equation \eqref{eq:pillaieq} has a solution.
	\end{myrem}
	
	\begin{myrem}
		\label{rem:counter}
		The finiteness result given in Theorem \ref{thm:pillaiconj} only holds for the vector $ (n,m,\deg p,\deg q) $.
		In general there can still be infinitely many pairs $ (p,q) $ of non-constant polynomials in $ \CC[x] $ for a given instance of the vector $ (n,m,\deg p,\deg q) $ such that equation \eqref{eq:pillaieq} holds.
		To illustrate this fact we will give some examples.
		
		Consider first the polynomial $ f = bx^k $ for a fixed integer $ k \geq 2 $ and a fixed real number $ b > 0 $. Moreover, let $ (n,m,\deg p,\deg q) = (k,k,1,1) $.
		Then we define
		\begin{align*}
			p &= \sqrt[k]{a^k+b} x \\
			q &= a x
		\end{align*}
		and get for any $ a > 0 $ the equality $ p^n - q^m = f $.
		
		Further take a look at the polynomial $ f = bx $ for a fixed real number $ b > 0 $ and let $ (n,m,\deg p,\deg q) = (2,2,1,1) $.
		Then we define
		\begin{align*}
			p &= a x + \frac{b}{4a} \\
			q &= a x - \frac{b}{4a}
		\end{align*}
		and get for any $ a > 0 $ the equality $ p^n - q^m = f $.
		
		So far we have only given examples for monomials $ f $, but there are also some for polynomials which are not monomials.
		For instance, we can generalize the previous example to more general linear polynomials.
		Consider $ f = sx+r $ for $ s \neq 0 $ and put $ (n,m,\deg p,\deg q) = (2,2,1,1) $.
		Now we define
		\begin{align*}
			p &= a x + \frac{ra}{s} + \frac{s}{4a} \\
			q &= a x + \frac{ra}{s} - \frac{s}{4a}
		\end{align*}
		and get for any $ a > 0 $ the equality $ p^n - q^m = f $.
		
		By multiplying in the last paragraph the polynomials $ p $ and $ q $ both by $ x^{\ell} $ we get the same result for the polynomial $ f = s x^{2\ell + 1} + r x^{2\ell} $.
		
		Finally, let $ f = ux^3 + tx^2 + sx $ for $ u \neq 0 $ and choose the vector $ (n,m,\deg p,\deg q) = (2,2,2,2) $.
		Here we define
		\begin{align*}
			p &= a x^2 + \frac{4a^2t + u^2}{4au} x + \frac{as}{u} \\
			q &= a x^2 + \frac{4a^2t - u^2}{4au} x + \frac{as}{u}
		\end{align*}
		and get for any $ a > 0 $ the equality $ p^n - q^m = f $.
		
		It is not clear whether or not there always, i.e.\ for any non-constant polynomial $ f \in \CC[x] $, exist infinitely many pairs $ (p,q) $ for at least one or even any instance of the vector $ (n,m,\deg p,\deg q) $, for which a solution exists at all, such that equation \eqref{eq:pillaieq} holds.
		We leave this as an open question.
	\end{myrem}
	
	\section{Preliminaries}
	
	We will work with valuations and give here for the readers convenience a short wrap-up of this notion that can e.g.\ also be found in \cite{fuchs-heintze-p8}:
	For $ c \in \CC $ and $ f(x) \in \CC(x) $, where $ \CC(x) $ is the rational function field over $ \CC $, we denote by $ \nu_c(f) $ the unique integer such that $ f(x) = (x-c)^{\nu_c(f)} p(x) / q(x) $ with $ p(x),q(x) \in \CC[x] $ such that $ p(c)q(c) \neq 0 $. Further we write $ \nu_{\infty}(f) = \deg q - \deg p $ if $ f(x) = p(x) / q(x) $.
	These functions $ \nu : \CC(x) \rightarrow \ZZ $ are up to equivalence all valuations in $ \CC(x) $.
	If $ \nu_c(f) > 0 $, then $ c $ is called a zero of $ f $, and if $ \nu_c(f) < 0 $, then $ c $ is called a pole of $ f $, where $ c \in \CC \cup \set{\infty} $.
	In $ \CC(x) $ the sum-formula
	\begin{equation*}
		\sum_{\nu} \nu(f) = 0
	\end{equation*}
	holds, where the sum is taken over all valuations (up to equivalence) in the considered function field.
	For a finite set $ S $ of valuations on $ \CC(x) $, we denote by $ \Oc_S^* $ the set of $ S $-units in $ \CC(x) $, i.e. the set
	\begin{equation*}
		\Oc_S^* = \set{f \in \CC(x)^* : \nu(f) = 0 \text{ for all } \nu \notin S}.
	\end{equation*}
	
	The proof of Theorem \ref{thm:pillaiconj} will use height functions. Hence, let us define the height of an element $ f \in \CC(x)^* $ by
	\begin{equation*}
		\Hc(f) := - \sum_{\nu} \min \setb{0, \nu(f)} = \sum_{\nu} \max \setb{0, \nu(f)}
	\end{equation*}
	where the sum is taken over all valuations (up to equivalence) on the rational function field $ \CC(x) $. Additionally we define $ \Hc(0) = \infty $.
	This height function satisfies some basic properties, listed in the lemma below which is proven in \cite{fuchs-karolus-kreso-2019}:
	
	\begin{mylemma}
		\label{lemma:heightproperties}
		Denote as above by $ \Hc $ the height on $ \CC(x) $. Then for $ f,g \in \CC(x)^* $ the following properties hold:
		\begin{enumerate}[a)]
			\item $ \Hc(f) \geq 0 $ and $ \Hc(f) = \Hc(1/f) $,
			\item $ \Hc(f) - \Hc(g) \leq \Hc(f+g) \leq \Hc(f) + \Hc(g) $,
			\item $ \Hc(f) - \Hc(g) \leq \Hc(fg) \leq \Hc(f) + \Hc(g) $,
			\item $ \Hc(f^n) = \abs{n} \cdot \Hc(f) $,
			\item $ \Hc(f) = 0 \iff f \in \CC^* $,
			\item $ \Hc(A(f)) = \deg A \cdot \Hc(f) $ for any $ A \in \CC[T] \setminus \set{0} $.
		\end{enumerate}
	\end{mylemma}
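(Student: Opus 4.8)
The plan is to reduce every claim to three local facts valid at each valuation $\nu$ on $\CC(x)$: that $\nu$ is a homomorphism on the multiplicative group, so $\nu(fg)=\nu(f)+\nu(g)$ and $\nu(1/f)=-\nu(f)$; that it obeys the ultrametric inequality $\nu(f+g)\geq\min\setb{\nu(f),\nu(g)}$, with equality whenever $\nu(f)\neq\nu(g)$; and that $\nu(c)=0$ for every $c\in\CC^*$. I also use the sum formula $\sum_\nu\nu(f)=0$, which is precisely what makes the two displayed expressions for $\Hc$ agree. It is convenient to read $\Hc(f)=\sum_\nu\max\setb{0,-\nu(f)}$ as counting the poles of $f$ with multiplicity and $\Hc(f)=\sum_\nu\max\setb{0,\nu(f)}$ as counting its zeros.

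Part a) is immediate: $\Hc(f)\geq 0$ is a sum of nonnegative terms, and $\Hc(1/f)=\Hc(f)$ follows by substituting $\nu(1/f)=-\nu(f)$ and matching the zero-counting form of $\Hc(1/f)$ against the pole-counting form of $\Hc(f)$. For the upper bound in b) I argue termwise: the ultrametric inequality gives $-\nu(f+g)\leq\max\setb{-\nu(f),-\nu(g)}$, hence $\max\setb{0,-\nu(f+g)}\leq\max\setb{0,-\nu(f)}+\max\setb{0,-\nu(g)}$, and summing over $\nu$ yields $\Hc(f+g)\leq\Hc(f)+\Hc(g)$. The lower bound then comes for free from the standard rearrangement $f=(f+g)+(-g)$ together with $\Hc(-g)=\Hc(g)$ (since $\nu(-1)=0$). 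Part c) is handled the same way via the homomorphism property, using $\max\setb{0,\nu(f)+\nu(g)}\leq\max\setb{0,\nu(f)}+\max\setb{0,\nu(g)}$ for the upper bound and $f=(fg)\cdot(1/g)$ with a) for the lower one. Part d) reduces to $\nu(f^n)=n\nu(f)$ for $n\geq 0$, the case $n<0$ following from a), giving $\Hc(f^n)=\abs{n}\cdot\Hc(f)$. Part e) is the observation that $\Hc(f)=0$ forces $\nu(f)\leq 0$ for every $\nu$, which with the sum formula gives $\nu(f)=0$ everywhere, so $f$ has neither zeros nor poles and is a nonzero constant; the converse is clear.

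The one part carrying genuine content is f), and here I count poles directly rather than invoke b)--d), since subadditivity alone only yields the inequality $\Hc(A(f))\leq\deg A\cdot\Hc(f)$. Factor $A(T)=c\prod_{i=1}^{d}(T-\alpha_i)$ over $\CC$ with $c\in\CC^*$ and $d=\deg A$, so that $A(f)=c\prod_{i=1}^{d}(f-\alpha_i)$ and $\nu(A(f))=\sum_{i=1}^{d}\nu(f-\alpha_i)$ for every $\nu$. I claim the poles of $A(f)$ are exactly the poles of $f$, each with its order multiplied by $d$. At a valuation with $\nu(f)\geq 0$ every factor has $\nu(f-\alpha_i)\geq 0$ (as $\nu(\alpha_i)\geq 0$), so $A(f)$ has no pole there; while at a pole of $f$, say $\nu(f)=-k<0$, the strict inequality $\nu(f)=-k<0\leq\nu(\alpha_i)$ forces $\nu(f-\alpha_i)=\nu(f)=-k$ for each $i$, whence $\nu(A(f))=-dk$. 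Reading off the pole-counting form, $\max\setb{0,-\nu(A(f))}=d\cdot\max\setb{0,-\nu(f)}$ at every $\nu$, and summing gives $\Hc(A(f))=d\cdot\Hc(f)$ exactly.

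The main obstacle is thus isolated in f): the point is to obtain the sharp identity rather than a mere inequality, which hinges on the equality case of the ultrametric inequality at the poles of $f$. This is exactly what guarantees that no pole is diminished and no new pole is created under $A$, and it is the feature that subadditivity cannot see. Everything else is bookkeeping with the two elementary scalar inequalities for $\max$ and the reflexive use of a) to pass from upper to lower bounds.
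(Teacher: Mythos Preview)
Your argument is correct and is the standard direct verification from the valuation axioms and the sum formula; the paper itself does not supply a proof of this lemma but simply quotes it from \cite{fuchs-karolus-kreso-2019}, so there is nothing to compare against. Your treatment of part f) via the equality case of the ultrametric inequality at the poles of $f$ is exactly the right mechanism for getting the identity rather than merely the subadditive bound. As a minor caveat on the \emph{statement} rather than on your proof: part b) tacitly requires $f+g\neq 0$ and part f) requires $A(f)\neq 0$ (equivalently, that $f$ is not a constant root of $A$), since the paper sets $\Hc(0)=\infty$; these degenerate cases are routinely suppressed.
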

	
	Moreover, the following theorem due to Brownawell and Masser is an important ingredient for our proof. It is an immediate consequence of Theorem B in \cite{brownawell-masser-1986}:
	
	\begin{mythm}[Brownawell-Masser]
		\label{thm:brownawellmasser}
		Let $ F/\CC $ be a function field in one variable of genus $ \gfr $. Moreover, for a finite set $ S $ of valuations, let $ u_1,\ldots,u_k $ be $ S $-units and
		\begin{equation*}
			1 + u_1 + \cdots + u_k = 0,
		\end{equation*}
		where no proper subsum of the left hand side vanishes. Then we have
		\begin{equation*}
			\max_{i=1,\ldots,k} \Hc(u_i) \leq \bino{k}{2} \left( \abs{S} + \max \setb{0, 2\gfr-2} \right).
		\end{equation*}
	\end{mythm}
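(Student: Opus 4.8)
The excerpt quotes this statement as an immediate consequence of Theorem~B of \cite{brownawell-masser-1986}, so the shortest proof I would give is a reduction: dividing the vanishing relation through by one of its terms puts it in the normalised form $1 + u_1 + \cdots + u_k = 0$ required by that theorem, after which the bound is read off directly. To explain why a bound of this precise shape holds, however, let me sketch the Wronskian method that powers results of this type. Normalise as above and note that minimality — no proper subsum vanishes, so in particular no singleton subsum vanishes — forces every $u_i$ to be nonzero. Fix a nonzero derivation $D$ of $F$ over $\CC$, for instance $D = d/dz$ for a separating element $z$; since $\CC$ is the full field of constants, the Wronskian of any $\CC$-linearly independent family is nonzero. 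Passing if necessary to a maximal $\CC$-linearly independent subset of $\set{u_1, \ldots, u_k}$ — whose cardinality $d \le k$ governs the size of the determinants — one thus has at hand a nonzero Wronskian $W = \det\setb{D^{\,r-1} u_{i_s}}$.

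The idea is then to express the $u_i$, or rather the coordinates of the projective point $\setb{u_1 : \cdots : u_k}$, through $W$ and its sub-Wronskians. Differentiating the relation $u_1 + \cdots + u_k = -1$ repeatedly and using that $D$ annihilates the constant $-1$, Cramer's rule writes these coordinates as ratios of Wronskian determinants whose entries are the $u_i$ and their derivatives up to order $d-1$. The entire estimate now reduces to controlling the divisor of a Wronskian place by place. At any valuation $\nu \notin S$ every $u_i$ is a unit, so $\nu(u_i) = 0$; there $W$ is regular away from the ramification of $z$ and can only acquire zeros, the ramification being deferred to the genus term discussed below. At a valuation $\nu \in S$ the pole order of $W$ is bounded by $\sum_i \max\setb{0,-\nu(u_i)}$ augmented by the derivative orders $0, 1, \ldots, d-1$, whose sum is $\bino{d}{2}$. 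Summing these local contributions against the sum-formula $\sum_\nu \nu(W) = 0$ and feeding the result back through the Cramer expressions bounds $\max_i \Hc(u_i)$ by an expression of the shape $\bino{k}{2}\setb{\abs{S} + (\text{genus term})}$, the binomial factor being exactly this accumulated derivative count.

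The step I expect to be the genuine obstacle is the Wronskian estimate at the places \emph{outside} $S$: one must bound the total number of zeros that $W$ acquires at good places together with the poles introduced by the ramification of $z$, and this is precisely where the genus enters. These contributions are controlled by a canonical-divisor term of degree $2\gfr - 2$ — via Riemann--Roch, or equivalently Riemann--Hurwitz applied to the covering defined by $z$ — and after the derivative bookkeeping this yields the term $\max\setb{0, 2\gfr - 2}$ in the statement. It is worth stressing that in the only case needed for the application of this paper, namely $F = \CC(x)$ with $\gfr = 0$, this correction is negative and is replaced by $0$, so the clean bound $\max_i \Hc(u_i) \le \bino{k}{2}\abs{S}$ results; carrying the canonical divisor correctly through every local estimate for arbitrary genus is the delicate part that Theorem~B of \cite{brownawell-masser-1986} has already packaged, which is why invoking it is the efficient route.
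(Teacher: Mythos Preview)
The paper does not prove this theorem at all: it merely quotes it as ``an immediate consequence of Theorem~B in \cite{brownawell-masser-1986}'' and uses it as a black box. You correctly identify this at the outset, and your one-line reduction (normalise by dividing through so that one term equals $1$) is exactly the trivial step the paper has in mind.

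Everything after your first sentence goes well beyond what the paper does. Your sketch of the Wronskian method --- differentiate the relation, apply Cramer's rule to express the projective coordinates as ratios of Wronskians, estimate the divisor of the Wronskian locally, with the $\binom{k}{2}$ arising from the sum $0+1+\cdots+(d-1)$ of derivative orders and the $\max(0,2\gfr-2)$ from the canonical divisor --- is a faithful outline of the Brownawell--Masser argument and is essentially correct as a heuristic. Since the paper treats the theorem purely as an import, there is nothing to compare strategies against; your added exposition is supplementary rather than an alternative route.
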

	
	\section{Proof}
	
	We have now all tools that we need for proving our theorem. Hence we can start with the proof:
	
	\begin{proof}[Proof of Theorem \ref{thm:pillaiconj}]
		Since $ f $ is not zero, we can rewrite equation \eqref{eq:pillaieq}, by dividing by $ f $ and bringing all terms to one side, as
		\begin{equation*}
			1 + \frac{q^m}{f} - \frac{p^n}{f} = 0.
		\end{equation*}
		The left hand side of this equation cannot have a vanishing subsum since there are only three summands and none of them is zero.
		Let $ S $ be a finite set of valuations such that $ p,q,f $ are all $ S $-units.
		This is possible with
		\begin{equation*}
			\abs{S} \leq 1 + \deg f + \deg p + \deg q.
		\end{equation*}
		Now we can apply Theorem \ref{thm:brownawellmasser} and get
		\begin{equation*}
			\Hc \left( \frac{p^n}{f} \right) \leq \abs{S} \leq 1 + \deg f + \deg p + \deg q
		\end{equation*}
		as well as
		\begin{equation*}
			\Hc \left( \frac{q^m}{f} \right) \leq \abs{S} \leq 1 + \deg f + \deg p + \deg q
		\end{equation*}
		because the genus of $ \CC(x) $ is $ 0 $.
		Using Lemma \ref{lemma:heightproperties} and the definition of the height this yields
		\begin{equation}
			\label{eq:ndegp}
			n \cdot \deg p = \Hc(p^n) \leq 1 + 2 \deg f + \deg p + \deg q
		\end{equation}
		and
		\begin{equation}
			\label{eq:mdegq}
			m \cdot \deg q = \Hc(q^m) \leq 1 + 2 \deg f + \deg p + \deg q.
		\end{equation}
		
		In the sequel we may assume without loss of generality that $ \deg p \geq \deg q $.
		The other case is completely analogous.
		For a better readability we give a name to the bound stated in the theorem, say
		\begin{equation*}
			B := 4 + 12 \deg f + 8 (\deg f)^2.
		\end{equation*}
		Now inequality \eqref{eq:ndegp} implies
		\begin{equation}
			\label{eq:makecases}
			(n-2) \cdot \deg p \leq 1 + 2 \deg f.
		\end{equation}
		We have to distinguish between three cases.
		The first case is $ n \geq 3 $, the second case supposes $ n = 2 $ and $ m \geq 3 $, and in the last case we consider the situation $ n = m = 2 $.
		
		So let us start with the case $ n \geq 3 $.
		Then inequality \eqref{eq:makecases} immediately gives
		\begin{equation*}
			\deg p \leq 1 + 2 \deg f \leq B
		\end{equation*}
		as well as
		\begin{equation*}
			n \leq 3 + 2 \deg f \leq B.
		\end{equation*}
		By the original equation \eqref{eq:pillaieq} we get
		\begin{align*}
			m \cdot \deg q &= \deg (q^m) = \deg (p^n-f) \leq \max \setb{n \cdot \deg p, \deg f} \\
			&\leq (1 + 2 \deg f)(3 + 2 \deg f) = 3 + 8 \deg f + 4 (\deg f)^2
		\end{align*}
		which yields
		\begin{equation*}
			\deg q \leq 3 + 8 \deg f + 4 (\deg f)^2 \leq B
		\end{equation*}
		and
		\begin{equation*}
			m \leq 3 + 8 \deg f + 4 (\deg f)^2 \leq B.
		\end{equation*}
		
		As the next step we consider the case $ n = 2 $ and $ m \geq 3 $.
		Here inequality \eqref{eq:ndegp} is equivalent to
		\begin{equation*}
			\deg p \leq 1 + 2 \deg f + \deg q.
		\end{equation*}
		Inserting this into inequality \eqref{eq:mdegq} gives
		\begin{equation*}
			m \cdot \deg q = \Hc(q^m) \leq 2 + 4 \deg f + 2 \deg q
		\end{equation*}
		and thus
		\begin{equation*}
			(m-2) \cdot \deg q \leq 2 + 4 \deg f.
		\end{equation*}
		Therefore we have the upper bounds
		\begin{equation*}
			\deg q \leq 2 + 4 \deg f \leq B
		\end{equation*}
		and
		\begin{equation*}
			m \leq 4 + 4 \deg f \leq B.
		\end{equation*}
		Using once again equation \eqref{eq:pillaieq} we also get
		\begin{align*}
			2 \cdot \deg p &= \deg (p^2) = \deg (q^m+f) \leq \max \setb{m \cdot \deg q, \deg f} \\
			&\leq (2 + 4 \deg f)(4 + 4 \deg f) = 8 + 24 \deg f + 16 (\deg f)^2
		\end{align*}
		which yields
		\begin{equation*}
			\deg p \leq 4 + 12 \deg f + 8 (\deg f)^2 = B.
		\end{equation*}
		
		Finally we consider the case $ n = m = 2 $.
		Here we can factorize equation \eqref{eq:pillaieq} to get
		\begin{equation*}
			(p-q) (p+q) = f.
		\end{equation*}
		Since $ f $ is non-zero, we have $ p-q \neq 0 $ and $ p+q \neq 0 $ and the two bounds
		\begin{align*}
			\deg (p-q) &\leq \deg f, \\
			\deg (p+q) &\leq \deg f.
		\end{align*}
		If $ \deg p > \deg q $, then we have
		\begin{equation*}
			\deg q < \deg p = \deg (p-q) \leq \deg f \leq B.
		\end{equation*}
		Otherwise, when $ \deg p = \deg q $, we get
		\begin{equation*}
			\deg p = \deg q = \deg (p+q) \leq \deg f \leq B
		\end{equation*}
		if $ p $ and $ q $ have the same leading coefficient, or
		\begin{equation*}
			\deg p = \deg q = \deg (p-q) \leq \deg f \leq B
		\end{equation*}
		if $ p $ and $ q $ have different leading coefficients, respectively.
		Thus the theorem is proven.
	\end{proof}
	
	\begin{myrem}
		Most of the parts of the proof can be generalized to the more general equation
		\begin{equation*}
			a p^n + b q^m = f
		\end{equation*}
		for a given non-constant polynomial $ f $ and given non-zero polynomials $ a,b $ in $ \CC[x] $.
		Again we are interested in solutions $ (n,m,p,q) $ in integers $ n,m \geq 2 $ and polynomials $ p,q \in \CC[x] $ with $ \deg p, \deg q \geq 1 $.
		Here the result is almost analogous to them of Theorem \ref{thm:pillaiconj} above, where now the upper bound for $ \max \setb{n,m,\deg p,\deg q} $ clearly also depends on $ \deg a $ and $ \deg b $.
		The essential difference in the general situation is that, if $ a $ and $ b $ are not both constant, we have to exclude the case $ n = m = 2 $ in the assumptions of the theorem in order to achieve that our proof still works since our factorization argument in the proof does not apply to this situation.
		We leave it up to the interested reader to calculate the upper bound in the generalized setting explicitly.
		
		Note that the exclusion of $ n = m = 2 $ has a parallel to the (still open) problem for integers.
		In the generalized equation
		\begin{equation*}
			a x^n + b y^m = f
		\end{equation*}
		with given non-zero integers $ a,b,f $ to be solved in positive integers $ n,m,x,y $, for the seeked finiteness result one obviously has to exclude the case $ n = m = 2 $ as there arise (for suitable $ a,b $ and $ f $) infinitely many solutions $ (x,y) $ from the related Pell equation if the unit rank of the involved number field is larger than zero.
	\end{myrem}

\end{document}